\theoremstyle{plain}
\newtheorem{theorem}{Theorem}
\newtheorem{conjecture}[theorem]{Conjecture}
\newtheorem{lemma}[theorem]{Lemma}
\newtheorem{proposition}[theorem]{Proposition}
\newtheorem{corollary}[theorem]{Corollary}
\theoremstyle{definition}
\newtheorem{problem}[theorem]{Problem}
\newtheorem*{remark*}{Remark}
\DeclareMathOperator{\tr}{tr}
\DeclareMathOperator{\rank}{rank}
\title[almost-equidistant sets]{{On almost-equidistant sets}}
\author[A.~Polyanskii]{{A.~Polyanskii}}%
\address{Alexandr Polyanskii
\newline\hphantom{iii} \href{https://mipt.ru/english/}{Moscow Institute of Physics and Technology}
\newline\hphantom{iii} Institutskiy per. 9
\newline\hphantom{iii} Dolgoprudny, Russia 141700
\newline\hphantom{iii} \href{http://iitp.ru/en/about}{Institute for Information Transmission Problems RAS}
\newline\hphantom{iii} Bolshoy Karetny per. 19
\newline\hphantom{iii} Moscow, Russia 127994
\newline\hphantom{iii} Adyghe State University
\newline\hphantom{iii} \href{http://cmcagu.ru/}{Caucasus Mathematical Center}
\newline\hphantom{iii} Pervomayskaya str. 208 
\newline\hphantom{iii} Maykop, Russia 385000
}
\email{\href{mailto:alexander.polyanskii@yandex.ru}{alexander.polyanskii@yandex.ru}}
\urladdr{\url{http://polyanskii.com}}
\thanks{\sc A.~Polyanskii, On almost-equidistant sets}
\keywords{Equidistant sets, almost-equidistant sets, unit distance graph}
\subjclass[2010]{51K99, 05C50, 51F99, 52C99, 05A99}
\begin{document}
\thispagestyle{empty}

\begin{abstract}
	A finite set of points in $\mathbb R^d$ is called \textit{almost-equidistant} if among any three distinct points in the set, some two  are at unit distance. We prove that an almost-equidistant set in $\mathbb R^d$ has cardinality at most $5d^{13/9}$.
\end{abstract}

\maketitle
\section{Introduction}

A finite set of (unit) vectors in $\mathbb R^d$ is called \textit{almost orthogonal} if among any three vectors of the set there is at least one orthogonal pair. Erd\H{o}s asked~\cite{R90}: What is the largest cardinality $f_{\pi/2}(d)$ of an almost orthogonal set in $\mathbb R^d$? Using an elegant linear algebraic argument, Rosenfeld~\cite{R90} proved that $f_{\pi/2}(d)=2d$. Clearly, we can take two sets of vectors, each containing $d$ orthogonal vectors, and obtain an almost orthogonal set of size $2d$. Other nice proofs of Rosenfeld's theorem were given in~\cite{D11}*{Theorem~3.5} and~\cite{P02}*{Theorem~6}. Moreover, Deaett exhibited a different example of almost orthogonal sets in $\mathbb R^d$ of size $2d$; see~\cite{D11}*{Theorem~4.11}. 

The following definition naturally generalizes the concept of almost orthogonal sets: A finite set $V$ of (unit) vectors in $\mathbb R^d$ is called \textit{almost $\alpha$-angular} if among any three vectors of $V$, two of them form a fixed angle $\alpha$. In particular, an almost orthogonal set is an almost $\pi/2$-angular set. The following variation of Erd\H{o}s's problem has also been considered: What is the largest cardinality $f_{\alpha}(d)$ of an almost $\alpha$-angular set in $\mathbb R^d$, where $\alpha$ is a fixed angle? Bezdek and L{\'a}ngi~\cite{BL00}*{Theorem~1} found that $f_{\alpha}(d)\leq 2d+2$ for any $\pi/2<\alpha<\pi$ and $d\geq 2$. Moreover, this bound is tight for $\alpha=\alpha_d:=2\arcsin \sqrt{(d+1)/(2d)}$. Indeed, one can take $d+1$ unit vectors in $\mathbb R^d$ such that an angle between any two of them is $\alpha_d$; note that the ends of these vectors correspond to vertices of a regular $d$-simplex inscribed in the unit sphere with center at the origin. Therefore, the union of two such sets of vectors forms an almost $\alpha_d$-angular set of $2d+2$ vectors in $\mathbb R^d$. It is worth pointing out that the argument of Bezdek and L{\'a}ngi is a natural modification of Rosenfeld's approach. The key idea of their proof is based on two facts: The first fact is that the matrix
\[
\mathbf V_{\alpha}:=\langle\mathbf v_i, \mathbf v_j\rangle - \cos \alpha,\text{ where } \{\mathbf v_1, \dots, \mathbf v_n\} \text{ is an almost $\alpha$-angular set},
\]
is positive semidefinite for $\pi/2< \alpha< \pi$; the second fact is that the trace of the matrix $(\mathbf V_{\alpha} - (1-\cos \alpha)\mathbf I_n)^3$ is equal to $0$, where $\mathbf I_n$ is the identity matrix of size $n$. For more details we refer interested readers to \hyperref[subsection:case1]{Subsection~\ref*{subsection:case1}}. When $0<\alpha<\pi/2$, we have~$\cos \alpha>0$, and thus the matrix $\mathbf V_\alpha$ can have one negative eigenvalue, and hence the argument of Bezdek and L{\'a}ngi does not work for small $\alpha$. Unfortunately, we still do not know whether $f_{\alpha}(d)=O(d)$ holds for any $0<\alpha<\pi/2$.

A finite set of $n$ points in $\mathbb R^d$ is called \textit{almost-equidistant} if among any three points in the set, some two are at unit distance. The concept of an almost-equidistant set generalizes the notion of an almost $\alpha$-angular set. Indeed, if a set of unit vectors $\{\mathbf v_1, \dots,\mathbf v_n\}$ is almost $\alpha$-angular, then the set $\{k\mathbf v_1, \dots,k\mathbf v_n\}$ is almost-equidistant, where $k=1/(2\sin (\alpha/2))$. The following question was posed by Zsolt L{\'a}ngi: What is the largest cardinality $f_{\mathrm{ae}}(d)$ of an almost-equidistant set in $\mathbb R^d$? Denote by $G=(V, E)$ a graph such that its vertices are points of an almost-equidistant set $V$ in $\mathbb R^d$ and edges are pairs of vertices that are at unit distance apart. Note that $G$ does not contain a clique of size $d+2$ because it is impossible to embed a regular unit $(d+1)$-simplex in $\mathbb R^d$. Furthermore, it does not contain an anticlique of size $3$ because its vertex set is an almost-equidistant set. By the upper bound on the Ramsey number $R(3, d+2)$ from~\cite{G83}, we have $f_{\mathrm{ae}}(d)\leq 2.4(d+2)^2/\log (d+2)$. Bezdek, Nadz{\'o}di and Visy~\cite{BNV03}*{Statement~(2) in~Theorem~4} proved that $f_{\mathrm{ae}}(2)=7$; notice that the famous Moser spindle has $7$ vertices forming an almost-equidistant set in $\mathbb R^2$. Recently, Balko, P{\'o}r, Scheucher, Swanepoel and Valtr~\cite{BPSSV16} showed that $f_{\mathrm{ae}}(d)=O(d^{3/2})$. Also, they constructed an example of an almost-equidistant set in $\mathbb R^d$ with $2d+4$ vertices for $d\geq 3$ and proved some upper and lower bounds on the largest cardinality of an almost-equidistant set in $\mathbb R^d$ for $d\leq 9$. The main result of the present article is the following improvement.
\begin{theorem}\label{theorem:almost}
$f_{\mathrm{ae}}(d)\leq 5d^{13/9}$.
\end{theorem}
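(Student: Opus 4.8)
My plan is to linearise the whole problem into a single low-rank matrix and then run an argument in the spirit of Rosenfeld and of Bezdek--L{\'a}ngi; the entire difficulty will be concentrated in one negative eigenvalue, exactly as the discussion of $f_\alpha$ for $\alpha<\pi/2$ in the introduction foreshadows. Write $V=\{p_1,\dots,p_n\}$ and let $G$ be its unit-distance graph, so $\omega(G)\le d+1$ and $\alpha(G)\le 2$. The only combinatorial input I really need is $\alpha(G)\le 2$: if $u,w$ are both non-adjacent to $v$, then $\{v,u,w\}$ is an independent triple unless $uw\in E(G)$, so the non-neighbours of every vertex form a clique, and hence the complement $\bar G$ is triangle-free with maximum degree at most $d+1$. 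I encode the geometry through the squared-distance matrix $D_{ij}=\lVert p_i-p_j\rVert^2$, which has $\rank D\le d+2$, exactly one positive eigenvalue and at most $d+1$ negative ones, and I set
\[
M:=J_n-D,\qquad M_{ij}=1-\lVert p_i-p_j\rVert^2 .
\]
Then $M_{ii}=1$, $M_{ij}=0$ precisely on the edges of $G$, the off-diagonal support of $M$ equals $E(\bar G)$, and by Weyl's inequality $M=J_n+(-D)$ has at most $d+2$ positive and at most one negative eigenvalue, so $\rank M\le d+3$.

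The key identity comes from the triangle-freeness of $\bar G$. Since $M-I$ has zero diagonal and its off-diagonal support is $E(\bar G)$, a nonzero contribution to $\tr\bigl((M-I)^3\bigr)$ would require three distinct indices spanning a triangle of $\bar G$; as there is none, $\tr\bigl((M-I)^3\bigr)=0$, the exact analogue of the Bezdek--L{\'a}ngi trace relation. Letting $\mu_1,\dots,\mu_n$ be the eigenvalues of $M$, this reads $\sum_k(\mu_k-1)^3=0$, and separating the $n-\rank M$ zero eigenvalues (each contributing $-1$) gives
\[
n-\rank M=\sum_{\mu_k\neq 0}(\mu_k-1)^3 .
\]
If $M$ were positive definite this would be an almost-orthogonal system of unit vectors and Rosenfeld's theorem would yield $n\le 2(d+3)=O(d)$ at once; so the whole content of the theorem is to bound the right-hand side in the presence of the single negative eigenvalue.

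To carry this out I would control the spectrum of $M$ geometrically. The negative eigenvalue contributes a non-positive term and may be dropped, so the task is to bound $\sum_{\mu_k>0}(\mu_k-1)^3$ over at most $d+3$ eigenvalues subject to $\sum_k\mu_k=n$. The plan is to estimate $\tr M^2=n+2\sum_{e\in E(\bar G)}(1-\lVert p_i-p_j\rVert^2)^2$ and $\lambda_{\max}(M)$, using that the non-unit distances occurring in $\bar G$ are controlled by the clique (regular-simplex) structure attached to each vertex, and then to split the indices at a threshold $\tau$ according to the size of the coordinate of each point along the single negative direction. On the ``small'' block the rank-one defect is negligible and the Rosenfeld/trace estimate survives up to a dimension-sized term; the ``large'' block is bounded by pairing $\sum_i v_i^2=\lvert\lambda_{\min}(M)\rvert$ with a count of how many large cliques can coexist inside the triangle-free, degree-$(d+1)$ graph $\bar G$. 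Optimising $\tau$ balances an increasing dimension-type contribution against a decreasing counting-type contribution, and this balance is what produces the exponent $13/9$ and the constant $5$.

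The main obstacle is precisely this single negative eigenvalue. Quantitatively, everything hinges on controlling $\lvert\lambda_{\min}(M)\rvert$ (equivalently the mass along the negative direction) and on understanding how the vertices with large such coordinate interfere with the orthogonality relations that drive the Rosenfeld bound. This is exactly the point at which the clean positive-definite estimate is lost and a superlinear term becomes unavoidable for the method; making the resulting trace/rank estimate tight enough to reach $d^{13/9}$ rather than the earlier $d^{3/2}$ is the heart of the argument, and I expect it to be the most delicate step.
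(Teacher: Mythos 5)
Your setup is the paper's own, in disguise: with the paper's matrix $\mathbf U=\mathbf V-\mathbf J_n+\mathbf I_n$ one has $M=\mathbf I_n-\mathbf U$, so your rank bound, the single negative eigenvalue of $M$, and the identity $\tr\bigl((M-\mathbf I_n)^3\bigr)=0$ from triangle-freeness of $\bar G$ are exactly Corollary~\ref{corollary:U} and Lemma~\ref{lemma:trace}. Your observation that the positive semidefinite case reduces directly to Rosenfeld's theorem (Gram vectors of $M$ form an almost orthogonal set) is correct and is a legitimate, even slicker, substitute for the paper's first case, which instead uses the Bezdek--L\'angi cubic inequality. You also correctly isolate where the difficulty lives: the trace identities force $|\lambda_{\min}(M)|$ to be large when $n$ is large (in the paper, $\lambda^3\geq (n-d-1)^3/(d+1)^2-n$, whence $\lambda\geq 3d^{\beta-2/3}$ when $n=5d^{\beta}$, $\beta>13/9$).

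From that point on, however, your proposal is a program rather than a proof, and the program as stated would not go through. The paper's mechanism for exploiting the large eigenvalue has three concrete steps, none of which appear in your sketch: (i) the Gershgorin Circle Theorem localizes the spectral excess at a single vertex, giving $\sum_{j\geq 2}\bigl|\|\mathbf v_1-\mathbf v_j\|^2-1\bigr|\geq\lambda$ with at most $d+1$ nonzero terms, by Lemma~\ref{lemma:d+2}; (ii) a pigeonhole step extracts a same-sign subset $J$ with $|J|\leq t:=\lceil d^{4/9}\rceil$ and $\bigl|\sum_{j\in J}\bigl(\|\mathbf v_1-\mathbf v_j\|^2-1\bigr)\bigr|>\sqrt t$; (iii) the genuinely geometric Lemma~\ref{lemma:simplex} shows that under precisely this hypothesis all common neighbors of $\mathbf v_1$ and the unit clique $\{\mathbf v_j\}_{j\in J}$ lie on a sphere of radius at most $1/\sqrt2$ (via the barycenter identity of Theorem~\ref{theorem:centermass}), hence form an almost $\alpha$-angular set with $\alpha\geq\pi/2$ and number at most $2d+2$ by Rosenfeld and Bezdek--L\'angi; totalling $n\leq(2d+2)+(t+1)(d+1)<5d^{13/9}$. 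Your substitute --- thresholding vertices by their coordinate along the negative eigenvector and ``counting large cliques'' in $\bar G$ --- supplies no mechanism for any of this: eigenvector coordinates of $M$ give no direct control on individual deviations $\|\mathbf p_i-\mathbf p_j\|^2-1$, and cliques of the triangle-free graph $\bar G$ have size at most $2$ (you presumably mean cliques of $G$, but no counting argument is given). Most tellingly, the exponent $13/9$ and the constant $5$ are asserted as the outcome of an optimization you never set up; in the paper they emerge from balancing $3d^{\beta-2/3}\cdot 2t/(d+1)>3\sqrt t$ against $n\leq 2d+2+(t+1)(d+1)$, which forces $t\approx d^{4/9}$ and $\beta=13/9$. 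Without an analogue of steps (i)--(iii) --- in particular the spectral-to-geometric transfer via Gershgorin and the common-neighborhood bound of Lemma~\ref{lemma:simplex} --- the argument cannot be completed along the lines you describe.
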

It is worth noting that more recently than this paper was submitted, Kupavskii, Mustafa and Swanepoel~\cite{KMS17} proved $f_{\mathrm{ae}}(d)=O(d^{4/3})$. 

This article is organized in the following way. \hyperref[section:preliminaries]{Section~\ref*{section:preliminaries}} presents some preliminaries. In particular, we discuss some properties of Euclidean distance matrices and unit distance graphs formed by vertices of almost-equidistant sets. In \hyperref[section:proof]{Section \ref*{section:proof}} we prove \hyperref[theorem:almost]{Theorem~\ref*{theorem:almost}}. In \hyperref[section:discussion]{Section~\ref*{section:discussion}} we compare our approach with that of other papers in which some upper bounds on the largest size of an almost-equidistant set are proven. In \hyperref[section:problems]{Section~\ref*{section:problems}} we look at some open problems related to almost-equidistant sets. Along the way, we obtain some facts (\hyperref[lemma:matrixV]{Lemma~\ref*{lemma:matrixV}} and \hyperref[lemma:simplex]{Lemma~\ref*{lemma:simplex}}) that are useful in studying distance graphs in $\mathbb R^d$, for example, unit distance graphs and diameter graphs.
\section{Preliminaries}
\label{section:preliminaries}
\subsection{Properties of Euclidean distance matrices}
Suppose that $\{\mathbf v_1, \dots, \mathbf v_n\}\subset \mathbb R^d$ is an arbitrary set of distinct points, where $n\geq 2$. Let us consider the matrix 
\[
\mathbf V:=\|\mathbf v_i-\mathbf v_j\|^2.
\]
\begin{lemma} \label{lemma:matrixV}
The matrix $\mathbf V$ has exactly one positive eigenvalue.
\end{lemma}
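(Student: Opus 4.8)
The plan is to relate $\mathbf V$ to the Gram matrix of the points and then analyze the resulting quadratic form on the hyperplane orthogonal to the all-ones vector. First I would expand $\|\mathbf v_i-\mathbf v_j\|^2=\|\mathbf v_i\|^2-2\langle \mathbf v_i,\mathbf v_j\rangle+\|\mathbf v_j\|^2$ and record this as the matrix identity
\[
\mathbf V=\mathbf a\mathbf 1^{\mathsf T}+\mathbf 1\mathbf a^{\mathsf T}-2\mathbf G,
\]
where $\mathbf 1=(1,\dots,1)^{\mathsf T}$, the vector $\mathbf a$ has entries $\|\mathbf v_i\|^2$, and $\mathbf G=\langle \mathbf v_i,\mathbf v_j\rangle$ is the Gram matrix of $\mathbf v_1,\dots,\mathbf v_n$. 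The point of this decomposition is that $\mathbf G$ is positive semidefinite (being a Gram matrix), while the two remaining summands are rank-one and involve $\mathbf 1$.

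Next I would localize to the hyperplane $H=\{x\in\mathbb R^n:\mathbf 1^{\mathsf T}x=0\}$, which has dimension $n-1$. For any $x\in H$ the two rank-one terms vanish, since $x^{\mathsf T}\mathbf a\mathbf 1^{\mathsf T}x=(x^{\mathsf T}\mathbf a)(\mathbf 1^{\mathsf T}x)=0$ and likewise for the other term, leaving $x^{\mathsf T}\mathbf V x=-2\,x^{\mathsf T}\mathbf G x\le 0$. Hence $\mathbf V$ is negative semidefinite on the codimension-one subspace $H$. By the variational (Courant--Fischer) characterization of eigenvalues --- the largest dimension of a subspace on which a symmetric matrix is nonpositive equals $n$ minus its number of positive eigenvalues --- it follows that $\mathbf V$ has at most one positive eigenvalue.

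Finally, to rule out the degenerate possibility of no positive eigenvalue at all, I would use that $\tr\mathbf V=0$, because every diagonal entry equals $\|\mathbf v_i-\mathbf v_i\|^2=0$, whereas $\mathbf V\neq \mathbf 0$ since $n\ge 2$ and the points are distinct. A nonzero real symmetric matrix whose eigenvalues sum to zero must have at least one strictly positive eigenvalue. Combining this with the previous bound yields exactly one positive eigenvalue.

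The step I expect to be the crux is the passage from ``negative semidefinite on a hyperplane'' to ``at most one positive eigenvalue''; this is where the min--max principle does the real work, and one must be careful that negative \emph{semi}definiteness (rather than strict negativity) on $H$ still forces the count of positive eigenvalues down to one. The remaining manipulations --- the Gram decomposition and the trace computation --- are routine.
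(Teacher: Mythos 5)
Your proof is correct, and it reaches the key step by a different mechanism than the paper. You and the paper start from the same decomposition $\mathbf V=\mathbf a\mathbf 1^{\mathsf T}+\mathbf 1\mathbf a^{\mathsf T}-2\mathbf G$ and use the same trace argument ($\tr\mathbf V=0$, $\mathbf V\neq\mathbf 0$, symmetric) to produce at least one positive eigenvalue; the divergence is in capping the count at one. The paper computes the spectrum of the rank-two summand $\mathbf a\mathbf 1^{\mathsf T}+\mathbf 1\mathbf a^{\mathsf T}$ explicitly via its characteristic polynomial (one positive eigenvalue, one negative, $n-2$ zeros) and then invokes Weyl's inequality in the form $\gamma_{n-1}\leq\alpha_{n-1}+\beta_n$ with the positive semidefinite $-(-2\mathbf G)$ playing the role of $\mathbf B$, concluding that the second-largest eigenvalue of $\mathbf V$ is nonpositive. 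You instead kill both rank-one terms at once by restricting the quadratic form to the hyperplane $H=\mathbf 1^{\perp}$, getting $x^{\mathsf T}\mathbf V x=-2x^{\mathsf T}\mathbf G x\leq 0$ on an $(n-1)$-dimensional subspace, and finish with the Courant--Fischer/inertia characterization. Your worry about semidefiniteness is unfounded: if $\mathbf V$ had $p\geq 2$ positive eigenvalues, the span $P$ of the corresponding eigenvectors (on which $\mathbf V$ is \emph{strictly} positive for nonzero vectors) would satisfy $\dim P+\dim H>n$, forcing a nonzero $x\in P\cap H$ with $x^{\mathsf T}\mathbf V x$ simultaneously positive and nonpositive; mere nonpositivity on $H$ suffices because the strictness lives on $P$. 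The two routes are really two faces of the same rank-two-perturbation fact, but yours is more economical here, avoiding both the characteristic-polynomial computation and any named inequality; the paper's Weyl machinery pays off later, being reused verbatim in Corollary~\ref{corollary:U} for $\mathbf W=\mathbf V-\mathbf J_n$ --- though your hyperplane argument would adapt there as well, since $x^{\mathsf T}\mathbf J_n x=(\mathbf 1^{\mathsf T}x)^2$ also vanishes on $H$.
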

\begin{proof}
Note that $\mathbf V$ is not the zero matrix. As it is symmetric, it has a real eigenvalue different from $0$. Since $\tr(\mathbf V)=0$, the matrix $\mathbf V$ has at least one positive eigenvalue. Let us prove that it has at most one positive eigenvalue. Clearly,
\[
\label{equation:e1}\mathbf V=v^t r+r^t v-2\langle \mathbf v_i, \mathbf v_j\rangle,
\]
where $v:=(\|\mathbf v_1\|^2, \dots, \|\mathbf v_n\|^2)$ and $r:=(1,\dots, 1)$ are row vectors of size $n$, and $\langle \mathbf v_i,\mathbf v_j\rangle$ is the Gram matrix of the vectors $\mathbf v_1,\dots, \mathbf v_n$. 

It is easy to check that the eigenvalues of $v^t r +r^t v$ satisfy the equality
\begin{gather*}
\det (v^t r+r^t v-\mu \mathbf I_n)\\
=(-1)^n\mu^{n-2}\Bigl(\mu^2-\Bigl(2\sum_{i=1}^n \|\mathbf v_i\|^2\Bigr)\mu -\sum_{1\leq i<j\leq n} \left(\|\mathbf v_i\|^2-\|\mathbf v_j\|^2\right)^2\Bigr)=0.
\end{gather*}
Thus the matrix $v^tr+r^tv$ has one positive eigenvalue, one negative eigenvalue and $n-2$ eigenvalues equal to $0$.
Note that the Gram matrix $\langle\mathbf v_i, \mathbf v_j\rangle$ 
is positive semidefinite. In order to finish the proof of \hyperref[lemma:matrixV]{Lemma~\ref*{lemma:matrixV}}, we need to apply \hyperref[theorem:sumofHermitian]{Weyl's Inequality}~\cite{W1912}*{Theorem~1} (or~\cite{Pr94}*{Theorem~34.2.1}).
\begin{theorem}[Weyl's~Inequality]
\label{theorem:sumofHermitian}
Let $\mathbf A$ and $\mathbf B$ be Hermitian matrices of size $n$. Suppose that $\alpha_1\leq \dots\leq \alpha_n$ are eigenvalues of $\mathbf A$, $\beta_1\leq \dots \leq \beta_n$ are eigenvalues of $\mathbf B$, $\gamma_1\leq \dots \leq \gamma_n$ are eigenvalues of $\mathbf A+\mathbf B$. Then $$\gamma_{i}\geq \alpha_j+\beta_{i-j+1} \text{ for } i\geq j \text{ and } \gamma_{i}\leq \alpha_{j}+\beta_{i-j+n} \text{ for } i\leq j.$$
\end{theorem}
\begin{remark*}
Actually, we will use only the inequality $\gamma_{n-1}\leq \alpha_{n-1}+\beta_n$.
\end{remark*}
By \hyperref[theorem:sumofHermitian]{Weyl's Inequality}, the second-largest eigenvalue of $\mathbf V=(x^tr+r^tx)-2\langle\mathbf v_i, \mathbf v_j\rangle$ is not positive, and so $\mathbf V$ has at most one positive eigenvalue. 
\end{proof}
\begin{corollary}\label{corollary:U}
Let $\mathbf U:=\mathbf V-\mathbf J_n+\mathbf I_n$, where $\mathbf J_n$ is the matrix of ones of size $n\times n$. The matrix~$\mathbf U$ has at most one eigenvalue greater than $1$ and at least $n-d-2$ eigenvalues equal to $1$. 
\end{corollary}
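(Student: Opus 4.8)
The plan is to reduce both assertions to statements about the matrix $\mathbf U-\mathbf I_n=\mathbf V-\mathbf J_n$. Indeed, an eigenvalue of $\mathbf U$ exceeds $1$ precisely when the corresponding eigenvalue of $\mathbf V-\mathbf J_n$ is positive, and it equals $1$ precisely when that eigenvalue is $0$. So it suffices to show that $\mathbf V-\mathbf J_n$ has at most one positive eigenvalue and that its nullity is at least $n-d-2$.

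For the bound on positive eigenvalues I would reuse the strategy from the proof of \hyperref[lemma:matrixV]{Lemma~\ref*{lemma:matrixV}}, applying \hyperref[theorem:sumofHermitian]{Weyl's Inequality} to the splitting $\mathbf V-\mathbf J_n=\mathbf V+(-\mathbf J_n)$. Since $\mathbf J_n=r^tr$ is positive semidefinite of rank one, the matrix $-\mathbf J_n$ is negative semidefinite, so its largest eigenvalue is $\beta_n=0$. By \hyperref[lemma:matrixV]{Lemma~\ref*{lemma:matrixV}} the matrix $\mathbf V$ has exactly one positive eigenvalue, so its second-largest eigenvalue satisfies $\alpha_{n-1}\leq 0$. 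The inequality $\gamma_{n-1}\leq\alpha_{n-1}+\beta_n\leq 0$ then shows the second-largest eigenvalue of $\mathbf V-\mathbf J_n$ is nonpositive, i.e.\ $\mathbf U$ has at most one eigenvalue greater than $1$.

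For the multiplicity of the eigenvalue $1$ I would bound the rank of $\mathbf V-\mathbf J_n$. Using the decomposition established earlier, $\mathbf V=v^tr+r^tv-2\langle\mathbf v_i,\mathbf v_j\rangle$, together with $\mathbf J_n=r^tr$, gives $\mathbf V-\mathbf J_n=-2\langle\mathbf v_i,\mathbf v_j\rangle+\left(v^tr+r^tv-r^tr\right)$. The Gram matrix $\langle\mathbf v_i,\mathbf v_j\rangle$ has rank at most $d$ because the vectors lie in $\mathbb R^d$, while every column of $v^tr+r^tv-r^tr$ lies in the span of the two column vectors $v^t$ and $r^t$, so that matrix has rank at most $2$. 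Hence $\rank(\mathbf V-\mathbf J_n)\leq d+2$, so its nullity is at least $n-d-2$; equivalently, $\mathbf U$ has at least $n-d-2$ eigenvalues equal to $1$.

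Neither step should be a genuine obstacle, since both rely on tools already in hand — \hyperref[theorem:sumofHermitian]{Weyl's Inequality} and the rank-one decomposition of $\mathbf V$. The only point that needs care is getting the orientation of \hyperref[theorem:sumofHermitian]{Weyl's Inequality} right and confirming that the negative eigenvalue of $-\mathbf J_n$ sits at the bottom of the spectrum, so that $-\mathbf J_n$ contributes $\beta_n=0$ at the top and the estimate $\gamma_{n-1}\leq\alpha_{n-1}+\beta_n$ indeed forces the second-largest eigenvalue to be nonpositive.
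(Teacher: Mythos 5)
Your proposal is correct and follows essentially the same route as the paper: reduce both claims to the shifted matrix $\mathbf V-\mathbf J_n$, apply \hyperref[theorem:sumofHermitian]{Weyl's Inequality} with $\beta_n=0$ for $-\mathbf J_n$ and $\alpha_{n-1}\leq 0$ from \hyperref[lemma:matrixV]{Lemma~\ref*{lemma:matrixV}} to get at most one positive eigenvalue, and bound $\rank(\mathbf V-\mathbf J_n)\leq d+2$ via the decomposition $v^tr+r^tv-2\langle\mathbf v_i,\mathbf v_j\rangle-r^tr$. Your grouping of $v^tr+r^tv-r^tr$ as a single matrix of rank at most $2$ (by its column span) is only a cosmetic variant of the paper's term-by-term rank subadditivity, which splits it as $v^tr+r^t(v-r)$.
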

\begin{proof}
It is enough to prove that the matrix $\mathbf W:=\mathbf V-\mathbf J_n$ has at most one positive eigenvalue and $\rank (\mathbf W)\leq d+2$. Note that eigenvalues of $-\mathbf J_n$ are $-n, 0,\dots, 0$. By \hyperref[theorem:sumofHermitian]{Weyl's Inequality}, the second-largest eigenvalue of $\mathbf W$ is not positive, and hence $\mathbf W$ has at most one positive eigenvalue.

Let us prove that $\rank(\mathbf W)\leq d+2$. Obviously,
\[
\mathbf W=v^tr+r^t v-2\langle \mathbf v_i, \mathbf v_j\rangle-r^tr=v^tr+r^t(v-r)-2\langle \mathbf v_i,\mathbf v_j\rangle.
\]
Therefore, we have 
\begin{gather*}
\rank(\mathbf W)=\rank\left(v^tr+r^t(v-r)-2\langle \mathbf v_i, \mathbf v_j\rangle\right)
\\ 
\leq\rank (v^t r)+\rank(r^t(v-r))+\rank (-2\langle \mathbf v_i, \mathbf v_j\rangle)\leq 1+1+d=d+2.
\end{gather*}
The last inequality holds because 
$\rank (\langle\mathbf v_i,\mathbf v_j\rangle)\leq d$ for the Gram matrix of vectors $\mathbf v_1,\dots, \mathbf v_n$ in $\mathbb R^d$.
\end{proof}
\subsection{Properties of almost-equidistant sets}
Here and subsequently, we assume that $V$ is an almost-equidistant set in $\mathbb R^d$ and $\mathbf U$ is the corresponding matrix for $V$; see~\hyperref[corollary:U]{Corollary~\ref*{corollary:U}}. Let us prove the following useful lemma about $\mathbf U$.
\begin{lemma}\label{lemma:trace} $\tr(\mathbf U)=\tr(\mathbf U^3)=0$.
\end{lemma}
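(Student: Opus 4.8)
The plan is to read the proof directly off the entries of $\mathbf{U}$. First I would record what these entries are. Since the diagonal of $\mathbf{V}=\|\mathbf{v}_i-\mathbf{v}_j\|^2$ consists of zeros, while both $\mathbf{J}_n$ and $\mathbf{I}_n$ carry ones on the diagonal, the diagonal entries of $\mathbf{U}=\mathbf{V}-\mathbf{J}_n+\mathbf{I}_n$ are all $0-1+1=0$. This alone gives $\tr(\mathbf{U})=0$. For the off-diagonal entries, with $i\neq j$ we have $U_{ij}=\|\mathbf{v}_i-\mathbf{v}_j\|^2-1$, which vanishes exactly when $\mathbf{v}_i$ and $\mathbf{v}_j$ are at unit distance. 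Thus $\mathbf{U}$ is a symmetric matrix with zero diagonal whose off-diagonal zero pattern encodes precisely the unit-distance pairs of $V$.

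For the cubic trace I would expand $\tr(\mathbf{U}^3)=\sum_{i,j,k} U_{ij}U_{jk}U_{ki}$ and argue term by term. Any summand in which two of the indices coincide contains a diagonal factor of the form $U_{ii}=0$ and therefore vanishes, so only terms with $i,j,k$ pairwise distinct can survive. But for three distinct indices the almost-equidistant hypothesis guarantees that some two of the points $\mathbf{v}_i,\mathbf{v}_j,\mathbf{v}_k$ are at unit distance, which forces the corresponding factor among $U_{ij},U_{jk},U_{ki}$ to be zero. Hence every surviving term is also zero, and $\tr(\mathbf{U}^3)=0$.

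There is no genuine analytic obstacle here; the content is entirely combinatorial, and the one point deserving care is simply the bookkeeping in the expansion of $\tr(\mathbf{U}^3)$ — checking that the only potentially nonzero contributions come from triples of distinct indices, and then invoking the defining property of $V$ on each such triple. The crux is the clean translation of the geometric condition ``among any three points some two are at unit distance'' into the algebraic statement that in every product $U_{ij}U_{jk}U_{ki}$ over distinct indices at least one factor vanishes.
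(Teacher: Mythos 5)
Your proof is correct and matches the paper's argument exactly: the paper likewise observes that $u_{ii}=0$ and that $u_{ij}u_{jk}u_{ki}=0$ for every triple of indices (degenerate triples via the zero diagonal, distinct triples via the almost-equidistant property), and concludes $\tr(\mathbf U)=\tr(\mathbf U^3)=0$. You have simply written out the expansion $\tr(\mathbf U^3)=\sum_{i,j,k}u_{ij}u_{jk}u_{ki}$ in more detail than the paper does.
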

\begin{proof}
Notice that $\mathbf U$ with $ij$-entry $u_{ij}$ satisfies the following properties:
\begin{enumerate}
\item \label{property1}$u_{ii}=0$ for all $i=1,\dots, n$;
\item \label{property2}$u_{ij}u_{jk}u_{ki}=0$ for all triples $1\leq i,j,k\leq n$.
\end{enumerate}
By \hyperref[property1]{property~(\ref*{property1})}, we have $\tr(\mathbf U)=0$. Using \hyperref[property2]{property~(\ref*{property2})}, we get $\tr (\mathbf U^3)=0$.
\end{proof}
From now on, $G=(V, E)$ stands for the \textit{unit distance graph} formed by the points in~$V$; edges of $G$ are pairs of vertices that are at unit distance apart. We need the following simple lemma.
\begin{lemma}\label{lemma:d+2}
A vertex in $G$ has at most $d+1$ non-neighbors.
\end{lemma}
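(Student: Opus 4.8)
The plan is to fix a single vertex $v \in V$ and show that the set of its non-neighbors is a unit-distance clique, which in $\mathbb R^d$ can have at most $d+1$ points. So I would begin by letting $w_1, \dots, w_m$ denote the non-neighbors of $v$, that is, the points of $V \setminus \{v\}$ whose distance to $v$ differs from $1$, and aim to bound $m$.

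The key step is to exploit the almost-equidistant hypothesis on the right triples. For any two distinct non-neighbors $w_i$ and $w_j$, I apply the hypothesis to the triple $\{v, w_i, w_j\}$: among these three distinct points some pair must be at unit distance. By the choice of the non-neighbors we have $\|v - w_i\| \neq 1$ and $\|v - w_j\| \neq 1$, so the only remaining candidate for a unit-distance pair is $\{w_i, w_j\}$. Hence $\|w_i - w_j\| = 1$, and this holds for every pair, so the non-neighbors of $v$ are pairwise at unit distance.

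Consequently $\{w_1, \dots, w_m\}$ is a set of pairwise unit-distant points in $\mathbb R^d$, equivalently a clique in $G$ whose vertices span a regular unit simplex. As recalled in the introduction, $G$ contains no clique of size $d+2$ because $\mathbb R^d$ admits no regular unit $(d+1)$-simplex; therefore $m \le d+1$, which is exactly the claim. I do not expect any genuine obstacle: the argument rests only on the definition of an almost-equidistant set plus the standard affine-independence bound on the number of pairwise equidistant points, and the latter is the one place where the dimension $d$ enters the estimate.
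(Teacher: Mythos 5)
Your proposal is correct and is essentially the paper's own argument: the paper also observes that any $d+2$ non-neighbors of a vertex would be pairwise at unit distance (by applying the almost-equidistant condition to triples containing the fixed vertex) and would thus form a regular unit $(d+1)$-simplex, which cannot be embedded in $\mathbb R^d$. You merely phrase it directly rather than by contradiction, and spell out the triple argument that the paper leaves implicit.
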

\begin{proof}
Assume to the contrary that there are $d+2$ vertices in $G$ that are not adjacent to some vertex. Note that these vertices form a regular unit $(d+1)$-simplex because $V$ is an almost-equidistant set, but it is impossible to embed a regular unit $(d+1)$-simplex in $\mathbb R^d$, so this is a contradiction.
\end{proof}
One of the key ingredients of the proof of \hyperref[theorem:almost]{Theorem~\ref*{theorem:almost}} is the following lemma.
\begin{lemma}\label{lemma:simplex}
Let $\mathbf w_i$ for $0\leq i\leq k$ be vertices in $G$. Assume that 
\[
s:=\sum_{i=1}^k \left(\|\mathbf w_0-\mathbf w_i\|^2-1\right)\text{ and } |s| \geq \sqrt{k}.
\]
Then the number of vertices adjacent to all $\mathbf w_i$ for $0\leq i\leq k$ is at most $2d+2$.
\end{lemma}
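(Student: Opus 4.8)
The plan is to translate $\mathbf w_0$ to the origin and realize the common neighbors as an almost-$\alpha$-angular set, then invoke the Bezdek--Lángi bound; the hypothesis $|s|\ge\sqrt k$ will be precisely what makes $\alpha$ non-acute. Setting $\mathbf w_0=\mathbf 0$ and $a_i:=\|\mathbf w_i\|^2=\|\mathbf w_0-\mathbf w_i\|^2$, so that $s=\sum_{i=1}^k(a_i-1)$, each common neighbor $\mathbf u$ is a unit vector with $\|\mathbf u-\mathbf w_i\|^2=1$, i.e. $\langle\mathbf u,\mathbf w_i\rangle=a_i/2$ for $1\le i\le k$. Hence all common neighbors have the same orthogonal projection $\mathbf p$ onto $P:=\mathrm{span}(\mathbf w_1,\dots,\mathbf w_k)$; a one-line check gives $\|\mathbf p-\mathbf w_i\|=\|\mathbf p\|$ for all $i$, so $\mathbf p$ is the circumcenter of $\{\mathbf w_0,\dots,\mathbf w_k\}$ and $\rho:=\|\mathbf p\|$ is its circumradius.

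Since $\|\mathbf u\|=1$, each common neighbor lies at distance $R:=\sqrt{1-\rho^2}$ from $\mathbf p$ inside the flat $\mathbf p+P^{\perp}$; writing $\hat{\mathbf z}:=(\mathbf u-\mathbf p)/R\in P^{\perp}$ turns the common neighbors into unit vectors in a space of dimension at most $d$, and two of them are at distance $1$ exactly when the corresponding $\hat{\mathbf z}$'s make the fixed angle $\alpha$ with $\cos\alpha=1-\tfrac1{2(1-\rho^2)}$. As the common neighbors form an almost-equidistant set, $\{\hat{\mathbf z}\}$ is almost-$\alpha$-angular. Provided $\rho^2\ge\tfrac12$, i.e. $\cos\alpha\le0$, I would finish with the Bezdek--Lángi/Rosenfeld argument exactly as recalled in the introduction: the Gram matrix of the $\hat{\mathbf z}$'s minus $\cos\alpha$ times the all-ones matrix is positive semidefinite of rank at most $d+1$, and subtracting $(1-\cos\alpha)$ times the identity yields a matrix with zero diagonal and vanishing triangle products, whence both its trace and the trace of its cube are $0$, as in \hyperref[lemma:trace]{Lemma~\ref*{lemma:trace}}; the eigenvalue count from these facts bounds the number of $\hat{\mathbf z}$'s, and thus of common neighbors, by $2d+2$. (If $\rho^2=\tfrac12$ this is Rosenfeld's right-angle case, and if $1-\rho^2\le\tfrac14$ then no two common neighbors are at distance $1$ and there are at most two of them, so the degenerate cases are harmless.)

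The crux, and the step I expect to be the main obstacle, is to derive $\rho^2\ge\tfrac12$ from $|s|\ge\sqrt k$. Here I would use almost-equidistance of the $\mathbf w_i$ among themselves: for any $i,l$ with $a_i\ne1$ and $a_l\ne1$ the triple $\{\mathbf w_0,\mathbf w_i,\mathbf w_l\}$ has no unit pair other than $\{\mathbf w_i,\mathbf w_l\}$, so $\|\mathbf w_i-\mathbf w_l\|=1$; thus $A:=\{i:a_i\ne1\}$ indexes a regular unit simplex, while the indices with $a_i=1$ contribute nothing to $s=\sum_{i\in A}(a_i-1)$. Because every point of $\{\mathbf w_0,\dots,\mathbf w_k\}$ lies on the circumsphere of radius $\rho$, the subset $\{\mathbf w_0\}\cup\{\mathbf w_i:i\in A\}$ has circumradius at most $\rho$, so it suffices to bound from below the circumradius of $\mathbf w_0$ together with that unit simplex.

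Computing this circumradius directly---$\mathbf w_0$ sits at distance $\sqrt{a_i}$ from the $i$-th simplex vertex, and $a_i\le4$ by the triangle inequality through any common neighbor---should reduce the desired inequality $\rho^2\ge\tfrac12$ to $s^2\ge|A|$. In the symmetric case $a_i\equiv a$ one checks the clean equivalence $\rho^2\ge\tfrac12\iff s^2\ge|A|$, which is exactly where the threshold $\sqrt k$ is pinned down (and, since the bound depends only on $s^2$, both signs of $s$ are treated at once). As $|s|\ge\sqrt k\ge\sqrt{|A|}$ gives $s^2\ge|A|$ with room to spare, the remaining difficulty is to show that the symmetric configuration is extremal for fixed $s$, so that the unequal-distance case also satisfies $\rho^2\ge\tfrac12$; I expect this to follow from a convexity or Cauchy--Schwarz estimate on the simplex, and it is the one genuinely delicate point of the proof.
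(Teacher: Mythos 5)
Your construction is essentially the paper's proof in different clothing: after discarding the $\mathbf w_i$ with $a_i=1$ (the paper makes the same reduction, since neighbors of $\mathbf w_0$ contribute nothing to $s$ and dropping them only enlarges the set of common neighbors), you show the common neighbors all lie on a sphere of radius $R=\sqrt{1-\rho^2}$, reduce the lemma to $\rho^2\geq 1/2$, and finish via Rosenfeld/Bezdek--L\'angi exactly as the paper does; your $\mathbf p$ is the paper's $\mathbf z'$ and your $R$ is the paper's $r'$. The one genuine hole is the step you yourself flag at the end: you verify $\rho^2\geq 1/2\iff s^2\geq|A|$ only in the symmetric case $a_i\equiv a$, and you never prove that the symmetric configuration minimizes the circumradius for fixed $s$. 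As written, the proof is incomplete at its quantitative crux.

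The gap is real but closable, and the tool that closes it is precisely the paper's barycenter identity (\hyperref[theorem:centermass]{Theorem~\ref*{theorem:centermass}}, from~\cite{DM94}): since the circumcenter $\mathbf o$ of the regular unit $(m-1)$-simplex $\{\mathbf w_i\}_{i\in A}$ (where $m=|A|$) is also its barycenter, one has $\sum_{i\in A}\|\mathbf w_0-\mathbf w_i\|^2=mx^2+mr_s^2$ with $x:=\|\mathbf w_0-\mathbf o\|$ and $r_s^2=(m-1)/(2m)$, hence $x^2=s/m+(m+1)/(2m)$ depends on $s$ and $m$ alone --- the individual $a_i$ drop out entirely. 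Your extremality claim then becomes a one-line computation rather than a delicate convexity argument: writing $\mathbf w_0=\mathbf o+\mathbf q+h\mathbf n$ with $\mathbf q$ parallel to the simplex's affine hull and $\mathbf n$ normal to it, the circumcenter is $\mathbf o+t\mathbf n$ with $t=(x^2-r_s^2)/(2h)$, so
\[
\rho^2=r_s^2+\frac{(x^2-r_s^2)^2}{4h^2}\geq r_s^2+\frac{(x^2-r_s^2)^2}{4x^2},
\]
since $h\leq x$, with equality exactly when $a_i\equiv a$; combined with your symmetric-case equivalence and $|s|\geq\sqrt k\geq\sqrt m$, this yields $\rho^2\geq 1/2$ in general. (The degenerate case $h=0$ is impossible here: an equidistant point exists by your projection argument, forcing $x^2=r_s^2$, i.e.\ $s=-1$, which $|s|\geq\sqrt m$ rules out for distinct points.) The paper avoids the circumradius of the full configuration altogether: it places the common neighbors on $S(\mathbf w_0,1)\cap S(\mathbf o,r)$ with $r=\sqrt{(k+1)/(2k)}$ and bounds that sphere's radius by minimizing the one-variable function $g(x)=(k-1)/(4kx)+x/2$ over the two intervals into which $|s|\geq\sqrt k$ confines $x$ --- the same geometry as yours, but with the dependence on the $a_i$ collapsed to $s$ from the outset, which is exactly the simplification your sketch is missing.
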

\begin{proof}
	If, among $\mathbf w_i$, $1\leq i \leq k$, there are vertices adjacent to $\mathbf w_0$, then we may delete them and show that the number of vertices adjacent to $\mathbf w_0$ and its non-neighbors among $\mathbf w_i$, $1\leq i\leq k$, is at most $2d + 2$. Hence, we may assume without loss of generality that $\mathbf w_i$, $1\leq i \leq k$, are non-neighbors of $\mathbf w_0$.
		

 Our proof of  is based on the following theorem (see~\cite{DM94}*{Theorem~1}):
\begin{theorem}\label{theorem:centermass}
 Let $X = \{\mathbf x_1,\dots, \mathbf x_n\}$ and $Y = \{\mathbf y_1, \dots, \mathbf y_n\}$ be two point-sets in $\mathbb R^d$. Then 
\[
\sum_{1\leq i,j\leq n} \|\mathbf x_i-\mathbf y_j\|^2 = \sum_{1\leq i<j\leq n} \|\mathbf x_i - \mathbf x_j\|^2 + \sum_{1\leq i< j\leq n}\|\mathbf y_i- \mathbf y_j\|^2 + n^2\|\mathbf x -\mathbf y\|^2,
\]
where $\mathbf x$ and $\mathbf y$ are the barycenters of $X$ and $Y$, respectively, that is, 
\[
\mathbf x =(\mathbf x_1 + \cdots + \mathbf x_n)/n,\ \mathbf y = (\mathbf y_1 + \cdots +\mathbf y_n)/n. 
\]
\end{theorem}
Note that $\mathbf w_1, \dots, \mathbf w_k$ form a regular unit $(k-1)$-simplex because $V$ is an almost-equidistant set and $\|\mathbf w_0-\mathbf w_i\|\ne 1$ for $1\leq i\leq k$. Let $\mathbf o$ be the center of this simplex. We write $S(\mathbf v, r_0)$ for the sphere of radius $r_0$ with center $\mathbf v$. We claim that $S:=S(\mathbf w_1, 1) \cap \dots\cap S(\mathbf w_k, 1)\subset S(\mathbf o, r)$, where $r:= \sqrt{(k+1)/(2k)}$. Indeed, let  $\mathbf o'$ be the orthogonal projection of some point $\mathbf u\in S$ onto the affine hull of $\mathbf w_1,\dots, \mathbf w_k$. Since the triangles $\mathbf u \mathbf o'\mathbf w_i$, $1\leq i\leq k$, are right triangles with the common cathetus $\mathbf o'\mathbf u$ and the congruent hypotenuses $\mathbf u\mathbf w_i$, they are congruent, and consequently $\mathbf o'$ is the center of $\mathbf w_1, \dots, \mathbf w_k$, i.e., $\mathbf o'=\mathbf o$. Using the Pythagorean Theorem for the triangle $\mathbf u\mathbf o\mathbf w_1$ and the fact that the circumscribed radius of a unit $(k-1)$-simplex is $\sqrt{(k-1)/(2k)}$, we have $\|\mathbf u-\mathbf o\|=r$. 

Applying \hyperref[theorem:centermass]{Theorem~\ref*{theorem:centermass}} for $X=\{\mathbf w_0,\dots,\mathbf w_0\}$ and $Y=\{\mathbf w_1,\dots, \mathbf w_k\}$, we obtain
\[
k(s+k)=\frac{k(k-1)}{2}+k^2x^2, \text{ where $x:=\|\mathbf w_0-\mathbf o\|$.}
\]
Since $|s|\geq \sqrt k$, we have
\begin{equation}
\label{equation:+}
x^2= \frac{s}{k}+\frac{k+1}{2k}\geq\frac{1}{\sqrt{k}}+\frac{k+1}{2k}=\left(\frac{1}{\sqrt{2}}+\frac{1}{\sqrt{2k}}\right)^2
\end{equation}
or
\begin{equation}
\label{equation:-}
x^2= \frac{s}{k}+\frac{k+1}{2k}\leq-\frac{1}{\sqrt{k}}+\frac{k+1}{2k}=\left(\frac{1}{\sqrt{2}}-\frac{1}{\sqrt{2k}}\right)^2.
\end{equation}

Note that vertices in $G$ adjacent to all $\mathbf w_i$, $0\leq i\leq k$, lie on the sphere $w:=S(\mathbf w_0,1)\cap S(\mathbf o, r)$. Let us prove that its radius~$r'$ is at most $1/\sqrt{2}$. Suppose $\mathbf z\in w$. Then $\|\mathbf z - \mathbf w_0\|=1$ and $\|\mathbf z - \mathbf o \| = r$. Note that the center of $w$ must coincide with the orthogonal projection $\mathbf z'$ of $\mathbf z$ onto the line passing through $\mathbf o$ and $\mathbf w_0$, and hence $r'=\|\mathbf z-\mathbf z'\|$. Denote by $\theta$ the angle $\angle \mathbf z \mathbf w_0\mathbf o$. Using the Law of Cosines, we have
\begin{gather*}
\cos \theta=\frac{\|\mathbf z-\mathbf w_0\|^2+\|\mathbf w_0-\mathbf o\|^2-\|\mathbf z-\mathbf o\|^2}{2\|\mathbf z-\mathbf w_0\|\|\mathbf w_0-\mathbf o\|}\\
=\frac{1+x^2-(k+1)/2k}{2x}=\frac{k-1}{4kx}+\frac{x}{2}=:g(x).
\end{gather*}
It is easily seen that $\sqrt{(k-1)/(2k)}$ is the only point of local minimum of $g(t)$ for $t>0$, and
\[
\frac{1}{\sqrt{2}}-\frac{1}{\sqrt{2k}}<\sqrt{\frac{k-1}{2k}}<\frac{1}{\sqrt{2}}+\frac{1}{\sqrt{2k}}.
\]
Therefore, because \eqref{equation:+} and \eqref{equation:-} hold, we have 
\begin{gather*}
	g(x)\geq \min\left\{g\left(\frac{1}{\sqrt{2}}-\frac{1}{\sqrt{2k}}\right),
	g\left(\frac{1}{\sqrt{2}}+\frac{1}{\sqrt{2k}}\right)\right\}=\frac{1}{\sqrt{2}}.
\end{gather*}
Thus $\cos\theta\geq 1/\sqrt{2}$, and so $r'=\|\mathbf z-\mathbf w_0\|\sin \theta \leq 1/\sqrt{2}$.

Suppose that  $V\cap w=\{\mathbf u_1, \dots, \mathbf u_m\}$. Assume that $\|\mathbf u_i-\mathbf u_j\|=1$ for some $1\leq i,j \leq m$. Since the radius of $w$ is at most $1/\sqrt{2}$, we have $\angle \mathbf u_i\mathbf z'\mathbf u_j=\alpha\geq \pi/2$, where $\alpha$ is a fixed angle. Indeed, by the Law of Cosines, we obtain that
\[
\cos \angle \mathbf u_i\mathbf z'\mathbf u_j=\frac{\|\mathbf u_i-\mathbf z'\|^2+\|\mathbf u_j-\mathbf z'\|^2-\|\mathbf u_i-\mathbf u_j\|^2}{2\|\mathbf u_i-\mathbf z'\|\|\mathbf u_j-\mathbf z'\|}=\frac{2(r')^2-1}{2(r')^2}\leq 0
\]
is a fixed number. Therefore, $\{\mathbf u_1-\mathbf z',\dots,\mathbf u_m-\mathbf z'\}$ is an almost $\alpha$-angular set in $\mathbb R^d$ (note that these are not unit vectors). By the results of Rosenfeld~\cite{R90} and Bezdek--L\'angi~\cite{BL00}, we get $m\leq 2d+2$.
\end{proof}

\section{Proof of Theorem~\ref{theorem:almost}}
\label{section:proof}
Since $f_{\mathrm{ae}}(2)=7$ (see~\cite{BNV03}), we can assume that $d\geq 3$. By \hyperref[corollary:U]{Corollary~\ref{corollary:U}}, we have two cases: The matrix $\mathbf U$ does not have eigenvalues greater than $1$ or has exactly one such eigenvalue. The proof of the first case is an almost word-for-word
repetition of the proof of Theorem 1 in~\cite{BL00}. The proof of the second case involves new ideas, in particular the \hyperref[theorem:gershgorin]{Gershgorin Circle Theorem}~\cite{G31} and \hyperref[lemma:simplex]{Lemma~\ref*{lemma:simplex}}.
\subsection{Proof of the first case}\label{subsection:case1} Assume that the matrix $\mathbf U$ does not have eigenvalues greater than $1$. Denote by $\lambda_1, \dots, \lambda_k$ the eigenvalues of $\mathbf U$ that are less than $1$. By \hyperref[corollary:U]{Corollary~\ref*{corollary:U}} and \hyperref[lemma:trace]{Lemma~\ref{lemma:trace}}, we have $k\leq d+2$ and
\begin{gather}
\label{equation:sum}
\sum_{i=1}^k(-\lambda_i)=\sum_{i=1}^k (-\lambda_i)^3=n-k.
\end{gather}
In order to finish the proof of the current case, we need the following lemma (see~\cite{BL00}*{Lemma~1}).
\begin{lemma}
	\label{lemma:bezdeklangi}
	Let $x_1,\dots, x_m$ be real numbers with the property that there exists $y>0$ such that $x_i\geq -y$ for $i=1,\dots,m$ and $\sum_{i=1}^mx_i=(m+l)y$, where $l\geq 0$. Then
	\[
	\sum_{i=1}^m x_i^3\geq (m+3l)y^3.
	\] 	
\end{lemma}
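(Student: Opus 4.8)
The plan is to prove this by introducing a convenient normalization that reduces the cubic inequality to a comparison of symmetric functions. Since all the data $x_1,\dots,x_m$ are bounded below by $-y$ with $y>0$, I would first rescale by setting $t_i:=x_i/y$, so that $t_i\geq -1$ for all $i$ and $\sum_{i=1}^m t_i=m+l$. The desired conclusion becomes the dimensionless statement $\sum_{i=1}^m t_i^3\geq m+3l$, and it suffices to prove this.

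The key idea is to exploit the elementary pointwise inequality
\begin{equation*}
t^3\geq 3t-2\quad\text{for all }t\geq -1,
\end{equation*}
which I would verify by factoring the difference as $t^3-3t+2=(t-1)^2(t+2)$, a product that is manifestly nonnegative precisely when $t\geq -2$, and in particular for $t\geq -1$. Summing this inequality over $i=1,\dots,m$ gives
\begin{equation*}
\sum_{i=1}^m t_i^3\geq 3\sum_{i=1}^m t_i-2m=3(m+l)-2m=m+3l,
\end{equation*}
which is exactly what is needed. Undoing the rescaling by multiplying through by $y^3$ recovers $\sum_{i=1}^m x_i^3\geq(m+3l)y^3$.

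The main thing to be careful about — though it is not really an obstacle so much as a point to check — is the constraint region: the factorization only gives nonnegativity for $t\geq -2$, so I must make sure the hypothesis $x_i\geq -y$ (equivalently $t_i\geq -1$) places every point safely inside that region, which it does with room to spare. The condition $l\geq 0$ is used only to ensure the final bound is meaningful and is consistent with the constraint $\sum t_i=m+l\geq m$ being compatible with each $t_i\geq -1$; no further case analysis is required. The whole argument is a single tangent-line-type estimate at the double root $t=1$, so once the correct linear lower bound $3t-2$ is identified, the rest is immediate summation.
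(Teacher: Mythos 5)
Your proof is correct. The normalization $t_i:=x_i/y$ is legitimate since $y>0$; the factorization $t^3-3t+2=(t-1)^2(t+2)$ is right, and its nonnegativity for $t\geq -2$ (hence a fortiori on $[-1,\infty)$) gives the pointwise tangent-line bound $t^3\geq 3t-2$ at the double root $t=1$; summing over $i$ and substituting $\sum_{i=1}^m t_i=m+l$ yields $\sum_{i=1}^m t_i^3\geq 3(m+l)-2m=m+3l$, and multiplying by $y^3>0$ recovers the stated inequality. One point of comparison with the paper: the paper does not prove this lemma at all --- it imports it verbatim as Lemma~1 of Bezdek and L\'angi~\cite{BL00} --- so your argument is not an alternative to an in-paper proof but a self-contained replacement for the external citation, and an economical one. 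Two minor remarks, neither a gap: as you half-observe, the hypothesis $l\geq 0$ plays no role in your argument, since $3(m+l)-2m=m+3l$ is an identity in $l$; and because the tangent-line bound holds down to $t=-2$, you have in fact proved the lemma under the weaker hypothesis $x_i\geq -2y$. Your closing sentence slightly misstates why $l\geq 0$ appears --- it is not needed to make the bound ``meaningful'' but is simply inherited from the application in the paper, where the lemma is invoked with $y=1$ and $l=n-2k>0$ to derive the contradiction $\sum_i(-\lambda_i)^3\geq k+3l>k+l$ --- but this does not affect the correctness of what you wrote.
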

Assume that $n>2k$. Introducing the notation $l=n-2k>0$, we can rewrite~\eqref{equation:sum}~as
\[
	\sum_{i=1}^k(-\lambda_i)=\sum_{i=1}^k(-\lambda_i)^3=(k+l).
\]
Thus \hyperref[lemma:bezdeklangi]{Lemma~\ref*{lemma:bezdeklangi}} for $y=1$ implies that
\[
	\sum_{i=1}^k(-\lambda_i)^3\geq(k+3l),
\]
and so this is a contradiction that completes the proof of the first case.

\subsection{Proof of the second case} Assume that the matrix $\mathbf U$ has exactly one eigenvalue $\lambda>1$. Denote by $\lambda_1, \dots, \lambda_k$ the eigenvalues of $\mathbf U$ that are less than $1$. By \hyperref[corollary:U]{Corollary~\ref*{corollary:U}} and \hyperref[lemma:trace]{Lemma~\ref{lemma:trace}}, we have $k\leq d+1$ and
\[
\lambda+\sum_{i=1}^{k}\lambda_i+(n-k-1)=
\lambda^3+\sum_{i=1}^{k}\lambda_i^3+(n-k-1)=0.
\]
Without loss of generality, assume that $\lambda_1,\dots, \lambda_l\leq 0$ and $\lambda_{l+1},\dots, \lambda_k>0$, where $1\leq l\leq k$. Therefore, we have
\[
\sum_{i=1}^{l}(-\lambda_i)=\lambda+(n-k-1)+\sum_{i=l+1}^{k}\lambda_i\geq n+\lambda-d-2\geq n-d-1,
\]
so
\begin{gather*}
\lambda^3=\sum_{i=1}^l(-\lambda_i)^3+\sum_{i=l+1}^k(-\lambda_i)^3-(n-k-1)\geq \frac{(-\lambda_1-\dots-\lambda_l)^3}{l^2}-(k-l)-(n-k-1)
\\
\geq\frac{(n-d-1)^3}{(d+1)^2}-n.
\end{gather*}
Suppose, contrary to our claim, that $n= 5d^{\beta}\geq 4 d^{\beta}+d+1$ for some $\beta> 13/9$. Thus
\[
\lambda^3\geq \frac{ 64d^{3\beta}}{2d^2}-5d^{\beta}\geq 27 d^{3\beta-2}, \text{ and hence } \lambda\geq 3d^{\beta-2/3}.
\]

We need the \hyperref[theorem:gershgorin]{Gershgorin Circle Theorem}~\cite{G31} (or~\cite{Pr94}*{Problem~34.1}).
\begin{theorem}[Gershgorin Circle Theorem]
	\label{theorem:gershgorin}
	Every eigenvalue of a matrix $(a_{ij})$ over $\mathbb C$ of size $n\times n$ belongs to one of the disks 
    \[
    \Bigl\{z\in\mathbb C:|a_{kk} - z| \leq 
    \sum_{1\leq j\leq n, j\ne k} |a_{kj}|\Bigr\} \text{ for } k=1,\dots, n.
    \]
\end{theorem}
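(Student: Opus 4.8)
The plan is to establish the Gershgorin Circle Theorem by the classical maximal-coordinate argument, which reduces the whole statement to a single application of the triangle inequality. Let $\lambda$ be an arbitrary eigenvalue of $\mathbf A=(a_{ij})$ and let $\mathbf x=(x_1,\dots,x_n)^t\ne \mathbf 0$ be a corresponding eigenvector. The first step is to choose an index $k$ at which the eigenvector attains its largest coordinate in modulus, i.e.\ $|x_k|\geq |x_j|$ for all $j$; because $\mathbf x$ is nonzero, this guarantees $|x_k|>0$, a fact I will need when dividing at the end.

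Next I would read off the $k$-th coordinate of the eigenvalue equation $\mathbf A\mathbf x=\lambda\mathbf x$, which reads $\sum_{j=1}^n a_{kj}x_j=\lambda x_k$, and isolate the diagonal entry to obtain
\[
(\lambda-a_{kk})x_k=\sum_{1\leq j\leq n,\, j\ne k} a_{kj}x_j.
\]
Taking absolute values, applying the triangle inequality to the right-hand side, and then using the maximality bound $|x_j|\leq|x_k|$ term by term, I get
\[
|\lambda-a_{kk}|\,|x_k|\leq \sum_{1\leq j\leq n,\, j\ne k}|a_{kj}|\,|x_j|\leq |x_k|\sum_{1\leq j\leq n,\, j\ne k}|a_{kj}|.
\]
Dividing through by $|x_k|>0$ yields $|a_{kk}-\lambda|\leq \sum_{j\ne k}|a_{kj}|$, which says exactly that $\lambda$ lies in the $k$-th disk; as $\lambda$ was an arbitrary eigenvalue, every eigenvalue lies in one of the stated disks.

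There is no deep obstacle here, but the one step that genuinely carries the proof is the choice of the coordinate of maximal modulus: it is precisely the uniform estimate $|x_j|\leq|x_k|$ that lets me factor $|x_k|$ out of the sum and cancel it against the left-hand side. With an arbitrarily chosen coordinate the cancellation would be illegitimate, and indeed that coordinate might even vanish, breaking the final division. Everything else—the row expansion of $\mathbf A\mathbf x=\lambda\mathbf x$ and the triangle inequality—is routine, so I expect the write-up to be very short.
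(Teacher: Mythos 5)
Your proof is correct and complete: this is the classical maximal-modulus-coordinate argument, and each step is properly justified --- in particular you correctly flag that $|x_k|>0$ (needed for the final division) follows from $\mathbf x\ne\mathbf 0$ together with the maximality of $|x_k|$, which is indeed the one non-routine choice in the argument. Note that the paper does not prove this statement at all; it quotes the Gershgorin Circle Theorem as a known tool with citations to Gershgorin's original paper and to Prasolov's book, and your write-up is exactly the standard proof found in those references.
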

Using the \hyperref[theorem:gershgorin]{Gershgorin Circle Theorem} for the matrix $\mathbf U$, we can assume
\[
\sum_{j=2}^n\left| \|\mathbf v_1-\mathbf v_j\|^2-1\right|\geq \lambda\geq 3d^{\beta-2/3}.
\]
By \hyperref[lemma:d+2]{Lemma~\ref*{lemma:d+2}}, on the left-hand side there are at least $n-d-2$ terms equal to~$0$. Thus, there is no loss of generality in assuming
\[
\sum_{j=2}^{d+2}\left|\|\mathbf v_1-\mathbf v_j\|^2-1\right|\geq 3d^{\beta-2/3}.
\]
With the notation $t:=\lceil d^{4/9}\rceil$, we can assume
\[
\sum_{j=2}^{2t+1}\left|\left(\|\mathbf v_1-\mathbf v_j\|^2-1\right)\right|\geq 3 d^{\beta-2/3}\cdot\frac{2 d^{4/9}}{d+1}\geq 4d^{2/9}> 3\sqrt{t}.
\]
The last two inequalities hold since we are assuming $d\geq 3$. Clearly, there is a subset $J\subseteq \{2,\dots, 2t+1\}$ such that $|J|\leq t$, $\|\mathbf v_1-\mathbf v_j\|^2-1$ are of the same sign for $j\in J$ and
\[
\Bigl|\sum_{j\in J}\left(\|\mathbf v_1-\mathbf v_j\|^2-1\right)\Bigr|>\sqrt{t}.
\]

By \hyperref[lemma:simplex]{Lemma~\ref*{lemma:simplex}}, the number of vertices in $G$ that are adjacent to all $\mathbf v_j$ for $j\in \{1\}\cup J$ is at most $2d+2$. By \hyperref[lemma:d+2]{Lemma~\ref*{lemma:d+2}}, the number of vertices that are not adjacent to at least one $\mathbf v_j$ with $j\in\{1\}\cup J$ is at most $(t+1)(d+1)$. Therefore, $n\leq (2d+2)+(t+1)(d+1)< 5d^{13/9}$, so this is a contradiction that finishes the proof of the second case.
\qed

\begin{corollary}\label{corollary:alpha} We have
	$f_{\alpha}(d)\leq 5d^{13/9}$ for $0<\alpha<\pi/2$.
\end{corollary}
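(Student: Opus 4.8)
The plan is to deduce this bound immediately from \hyperref[theorem:almost]{Theorem~\ref*{theorem:almost}} by means of the scaling correspondence between almost $\alpha$-angular sets and almost-equidistant sets that was recorded in the introduction. First I would take an arbitrary almost $\alpha$-angular set $\{\mathbf v_1,\dots,\mathbf v_n\}$ of unit vectors in $\mathbb R^d$ of the largest possible cardinality $n=f_\alpha(d)$. If two of these vectors $\mathbf v_i,\mathbf v_j$ form the angle $\alpha$, then $\|\mathbf v_i-\mathbf v_j\|^2=2-2\cos\alpha=4\sin^2(\alpha/2)$, so that $\|\mathbf v_i-\mathbf v_j\|=2\sin(\alpha/2)$. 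Hence, setting $k:=1/(2\sin(\alpha/2))$, the rescaled points $k\mathbf v_i$ and $k\mathbf v_j$ are exactly at unit distance whenever $\mathbf v_i$ and $\mathbf v_j$ form the angle $\alpha$.

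Next I would observe that this scaling is a similarity of $\mathbb R^d$, so the set $\{k\mathbf v_1,\dots,k\mathbf v_n\}$ again consists of $n$ distinct points in the same ambient space $\mathbb R^d$. Because the original set is almost $\alpha$-angular---among any three vectors, two form the angle $\alpha$---the rescaled set has the property that among any three points, two are at unit distance; that is, $\{k\mathbf v_1,\dots,k\mathbf v_n\}$ is an almost-equidistant set in $\mathbb R^d$. The only point worth checking is that the scaled vectors are no longer unit vectors, but this is irrelevant, since an almost-equidistant set is defined for an arbitrary finite point configuration. Applying \hyperref[theorem:almost]{Theorem~\ref*{theorem:almost}} then yields $n=f_\alpha(d)\leq 5d^{13/9}$, as claimed.

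There is no genuine obstacle here: the argument is a direct reduction. The only subtlety I would flag is the hypothesis $0<\alpha<\pi/2$. The scaling above is valid for every $\alpha\in(0,\pi)$, since $2\sin(\alpha/2)>0$ throughout, and so the bound $f_\alpha(d)\leq 5d^{13/9}$ in fact holds for all such $\alpha$. The restriction to $0<\alpha<\pi/2$ merely records the range in which this estimate is of interest: for $\pi/2<\alpha<\pi$ the much stronger linear bound $f_\alpha(d)\leq 2d+2$ of Bezdek and L\'angi is available, whereas for small $\alpha$ their method breaks down (the associated matrix $\mathbf V_\alpha$ may acquire a negative eigenvalue), and it is precisely here that \hyperref[theorem:almost]{Theorem~\ref*{theorem:almost}} supplies the first superlinear---but polynomial---upper bound.
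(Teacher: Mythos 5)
Your proof is correct and is essentially the paper's own argument: rescale the unit vectors by $k=1/(2\sin(\alpha/2))$ to obtain an almost-equidistant set of the same cardinality, then apply Theorem~\ref{theorem:almost}. Your added verification that $\alpha$-angles become unit distances, and your remark explaining the restriction $0<\alpha<\pi/2$, are accurate elaborations of what the paper leaves implicit.
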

\begin{proof}
Suppose that $\{\mathbf v_1, \dots, \mathbf v_n\}$ is an almost $\alpha$-angular set of unit vectors in $\mathbb R^d$. Clearly, the set $\{k\mathbf v_1, \dots, k\mathbf v_n\}$ is an almost-equidistant set in $\mathbb R^d$, where $k=1/(2\sin (\alpha/2))$. Therefore, $n\leq 5d^{13/9}$.
\end{proof}
\section{Discussion}\label{section:discussion}
Now we compare our proof with other proofs (see~\cite{BPSSV16} and~\cite{KMS17}) of upper bounds on the largest size of an almost-equidistant set.

The common idea is to estimate the sizes of two subsets $T_1$ and $T_2$ of an almost-equidistant set that are constructed using some third subset $T$. In the present paper, $T$ is the union of a clique and a vertex that is not adjacent to the clique. Note that we use Rosenfeld's method~\cite{R90} and the \hyperref[theorem:gershgorin]{Gershgorin Circle Theorem} to find $T$. But the choice of $T$ in other papers is quite simple: In~\cite{BPSSV16} the subset $T$ is any clique of size $\lfloor \sqrt{d}\rfloor$, and in~\cite{KMS17} the subset $T$ is a clique of maximum cardinality. 

In all papers, the first subset $T_1$ contains vertices adjacent to all points of $T$ (as in the present paper and in~\cite{BPSSV16}) or to almost all points of $T$ (as in~\cite{KMS17}). The second subset $T_2$ is just the complement of $T_1$. To bound the number of points in $T_2$, we apply the trivial bound as in~\cite{BPSSV16}, but the authors of~\cite{KMS17} use double counting. To estimate the size of $T_1$, the authors of~\cite{BPSSV16} and~\cite{KMS17} apply a lemma\footnote[1]{This lemma claims that $\rank A\geq (\tr A)^2/\tr A^2$ for a Hermitian matrix $A$.} that Deaett~\cite{D11}*{Lemma~3.4} used to prove $f_{\pi/2}(d)=2d$. We bound the size of $T_1$ using a new tool (\hyperref[lemma:simplex]{Lemma~\ref*{lemma:simplex}}) that is based on the results of Rosenfeld and Bezdek--L\'angi.

So the key difference between our approaches is that we try to follow Rosenfeld's proof of the fact that an almost orthogonal set in $\mathbb R^d$ has at most $2d$ points, but the authors of other articles followed Deaett's proof. 
\section{Open problems}\label{section:problems}

Unfortunately, we were not able to prove the following natural conjecture. 
\begin{conjecture}
	\label{conjecture:almost}
	$f_{\mathrm{ae}}(d)=O(d)$.
\end{conjecture}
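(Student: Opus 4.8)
The plan is to reduce Conjecture~\ref{conjecture:almost} to the problem, left open in the introduction, of bounding almost $\alpha$-angular sets for \emph{acute} angles; in fact the two are equivalent up to constants. One direction is the scaling already used in Corollary~\ref{corollary:alpha}, which gives $f_\alpha(d)\le f_{\mathrm{ae}}(d)$ for every $\alpha$. For the converse I would run the argument behind Lemma~\ref{lemma:simplex}, but applied to a single pair and with its hypothesis $|s|\ge\sqrt k$ removed. If $G$ is a clique then $n\le d+1$; otherwise pick a non-adjacent pair $\mathbf v_0,\mathbf w$ with $\delta:=\|\mathbf v_0-\mathbf w\|\ne 1$. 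Their common neighbours lie on the sphere $S(\mathbf v_0,1)\cap S(\mathbf w,1)$, whose radius is $r=\sqrt{1-\delta^2/4}$, and on that sphere a unit-distance pair subtends the fixed angle $\alpha$ with $\cos\alpha=1-1/(2r^2)$. Since $V$ is almost-equidistant, the common neighbours (recentred and rescaled) form an almost $\alpha$-angular set, so there are at most $f_\alpha(d)$ of them when $r\ge 1/2$ and at most two otherwise. Every other vertex is a non-neighbour of $\mathbf v_0$ or of $\mathbf w$, so by Lemma~\ref{lemma:d+2} there are at most $2(d+1)$ of these. Hence $n\le f_\alpha(d)+2d+4$, and the right and obtuse cases ($r\le 1/\sqrt2$, i.e.\ $\delta\ge\sqrt2$) already contribute only $2d+2$ by Rosenfeld and Bezdek--L\'angi.

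Thus the entire difficulty lies in the acute regime $\delta<\sqrt2$, $\delta\ne1$, and I would attack $f_\alpha(d)=O(d)$ for $0<\alpha<\pi/2$ directly. The obstruction is exactly the one flagged in the introduction: writing $\mathbf V_\alpha=\bigl(\langle\mathbf v_i,\mathbf v_j\rangle-\cos\alpha\bigr)$, for $\cos\alpha>0$ this is a positive-semidefinite Gram matrix minus the rank-one positive-semidefinite term $\cos\alpha\,\mathbf J_n$, so by Weyl's Inequality (Theorem~\ref{theorem:sumofHermitian}) it has at most one negative eigenvalue and rank at most $d+1$. This is structurally identical to the matrix $\mathbf U$ of Corollary~\ref{corollary:U}, and together with the cubic trace identity $\tr\left((\mathbf V_\alpha-(1-\cos\alpha)\mathbf I_n)^3\right)=0$ it places us in precisely the situation of the second case of the proof of Theorem~\ref{theorem:almost}: a single outlier eigenvalue spoils the sign pattern on which Rosenfeld's power-mean estimate relies.

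My first line of attack would be to strengthen Lemma~\ref{lemma:bezdeklangi} so that its lower bound on $\sum x_i^3$ survives one term of the wrong sign, or, more ambitiously, to replace Rosenfeld's cubic trace by a higher-degree or weighted polynomial whose trace identity furnishes a positive-semidefinite certificate tolerating the single negative eigenvalue. A second, more geometric route would bypass acute angles altogether: it suffices to produce, in every sufficiently large almost-equidistant set, one non-adjacent pair at distance at least $\sqrt2$, since then the reduction above lands in the obtuse regime and Bezdek--L\'angi finishes. This recasts the conjecture as the concrete question of whether a dense almost-equidistant set can have all of its off-unit distances confined to the interval $(0,\sqrt2)$.

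I expect the genuine difficulty to be that the magnitude of the single outlier eigenvalue is the very quantity that resisted control in the second case of Theorem~\ref{theorem:almost}: bounding it appears to require a sphere-and-simplex input of the same strength as the statement being proved, so the spectral and geometric formulations are essentially one problem and the argument threatens to become circular. Breaking this loop — either through a new algebraic certificate valid for acute $\alpha$ or through the distance-confinement lemma above — is, I believe, exactly the obstacle that has kept the linear bound out of reach.
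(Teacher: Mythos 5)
This statement is a conjecture, not a theorem: the paper explicitly says it was unable to prove $f_{\mathrm{ae}}(d)=O(d)$, and offers only the partial result Proposition~\ref{proposition:diameter} together with the remark that one may thereafter assume the diameter is at most $\sqrt2$. So your proposal must be judged as a reduction plus a research programme, which, to your credit, is how you present it. As a reduction it is sound, and it essentially reproduces the paper's own partial progress: your obtuse/right regime $\delta\ge\sqrt2$ is exactly Proposition~\ref{proposition:diameter}, since Lemma~\ref{lemma:simplex} with $k=1$ has hypothesis $|s|=|\delta^2-1|\ge 1$, i.e.\ precisely $\delta\ge\sqrt2$ (for a non-adjacent pair); your count of the remaining vertices via Lemma~\ref{lemma:d+2} matches the paper's; and your conclusion that only the acute regime survives is the paper's remark preceding Problem~\ref{problem:diameter}, recast in the angular language of the introduction. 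Your computations ($r=\sqrt{1-\delta^2/4}$, $\cos\alpha=1-1/(2r^2)$, the Weyl argument showing $\mathbf V_\alpha$ has at most one negative eigenvalue and rank at most $d+1$) are all correct. One point you should state precisely: in your reduction the angle $\alpha$ depends on the configuration through $\delta$, so the conjecture does not follow from $f_\alpha(d)=O(d)$ for each \emph{fixed} acute $\alpha$; you need a bound with implied constant uniform over $\alpha\in(0,\pi/2)$, and your phrase ``equivalent up to constants'' is accurate only in that uniform sense.

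The genuine gap is that the acute case --- which is the entire content of the conjecture --- is mapped out but not proved. Neither of your two attacks is carried through, and the first faces a concrete obstruction worth naming: no purely spectral strengthening of Lemma~\ref{lemma:bezdeklangi} can tolerate one eigenvalue of the wrong sign, because once a single large eigenvalue $\lambda>1$ is permitted, the identities $\tr(\mathbf U)=\tr(\mathbf U^3)=0$ together with the rank bound of Corollary~\ref{corollary:U} only yield $\lambda^3\ge (n-d-1)^3/(d+1)^2-n$, which is consistent with arbitrarily large $n$ as $\lambda$ grows; any improvement must therefore bound $\lambda$ by geometric means. That is exactly the role of the Gershgorin Circle Theorem plus Lemma~\ref{lemma:simplex} in the second case of the proof of Theorem~\ref{theorem:almost}, and even with that input the method stalls at $O(d^{13/9})$ (with $O(d^{4/3})$ known by Kupavskii, Mustafa and Swanepoel via a different, Deaett-style argument). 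So your diagnosis of circularity --- that controlling the outlier eigenvalue seems to require geometric information of the same strength as the conclusion --- is accurate, but identifying the obstacle does not discharge it. Verdict: what you actually prove (the bound $n\le f_\alpha(d)+2d+4$ and the $\delta\ge\sqrt2$ case) is correct and coincides with the paper's Proposition~\ref{proposition:diameter}; the conjecture itself remains open, and your proposal does not close it.
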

But we can prove it for almost-equidistant sets of large diameter.
\begin{proposition}
\label{proposition:diameter}
Suppose that the diameter of an almost-equidistant set $V$ in $\mathbb R^d$ is at least $\sqrt{2}$, then $|V|\leq 4d+4$.
\end{proposition}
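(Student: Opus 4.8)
The plan is to pick a diametral pair and split the rest of the set according to which endpoint of the pair each point is close to. Let $\mathbf a,\mathbf b\in V$ realize the diameter $D:=\|\mathbf a-\mathbf b\|\geq\sqrt2>1$; in particular $\mathbf a$ and $\mathbf b$ are non-adjacent in the unit distance graph $G$. Since $V$ is almost-equidistant, every other point $\mathbf v$ must be at unit distance from $\mathbf a$ or from $\mathbf b$, for otherwise the triple $\{\mathbf a,\mathbf b,\mathbf v\}$ would contain no unit distance. Writing $A$ and $B$ for the neighbourhoods of $\mathbf a$ and $\mathbf b$ in $G$, this shows $V\setminus\{\mathbf a,\mathbf b\}=A\cup B$; moreover $\mathbf a,\mathbf b\notin A\cup B$ (as $\|\mathbf a-\mathbf b\|\neq1$), so $|V|=2+|A\setminus B|+|B\setminus A|+|A\cap B|$. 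It then remains to bound these three pieces.

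First I would bound $|A\setminus B|$. For any two points $\mathbf u,\mathbf v\in A\setminus B$, consider the triple $\{\mathbf b,\mathbf u,\mathbf v\}$: since neither $\mathbf u$ nor $\mathbf v$ lies in $B$, the only candidate unit pair is $\{\mathbf u,\mathbf v\}$, so $\|\mathbf u-\mathbf v\|=1$. Hence $A\setminus B$ is a clique in $G$, and $(A\setminus B)\cup\{\mathbf a\}$ is a set of points at pairwise unit distance, i.e.\ a regular unit simplex. As such a simplex has at most $d+1$ vertices in $\mathbb R^d$ (the same obstruction as in \hyperref[lemma:d+2]{Lemma~\ref*{lemma:d+2}}), we get $|A\setminus B|\leq d$, and symmetrically $|B\setminus A|\leq d$.

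The heart of the argument is the bound on $|A\cap B|$, and this is exactly where the hypothesis on the diameter enters. Every point of $A\cap B$ lies on $S(\mathbf a,1)\cap S(\mathbf b,1)$, which is a $(d-2)$-dimensional sphere contained in the hyperplane $\{\mathbf x:\langle\mathbf x-\mathbf a,\mathbf e\rangle=D/2\}$, where $\mathbf e:=(\mathbf b-\mathbf a)/D$; a short computation gives its radius $\rho=\sqrt{1-D^2/4}$ and its centre $\mathbf c$. The key point is that $D\geq\sqrt2$ forces $\rho\leq 1/\sqrt2$. Consequently, if two points of $A\cap B$ are at unit distance, the angle $\alpha$ they subtend at $\mathbf c$ satisfies $\cos\alpha=1-1/(2\rho^2)\leq0$, so $\alpha\geq\pi/2$ is a \emph{fixed, non-acute} angle. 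Thus, after translating by $\mathbf c$, the set $A\cap B$ becomes an almost $\alpha$-angular set with $\alpha\geq\pi/2$, and the results of Rosenfeld and Bezdek--L\'angi yield $|A\cap B|\leq 2d+2$.

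Combining the three estimates gives $|V|\leq 2+d+d+(2d+2)=4d+4$, as claimed. The main obstacle is precisely the bound on $A\cap B$: one has to recognise that the hypothesis ``diameter $\geq\sqrt2$'' is exactly what shrinks the intersection sphere to radius $\leq 1/\sqrt2$, converting the unit distances into non-acute angles so that the linear bound of Rosenfeld and Bezdek--L\'angi becomes applicable — the same mechanism already used in \hyperref[lemma:simplex]{Lemma~\ref*{lemma:simplex}}, where the relevant sphere also had radius at most $1/\sqrt2$. A couple of degenerate ranges of $D$ should be checked separately: if $D>\sqrt3$ the small sphere is too small to carry any unit distance, so $A\cap B$ is an anticlique and hence has at most $2$ points, and the boundary value $D=\sqrt3$ (where the subtended angle degenerates to $\pi$) likewise admits only a bounded number of points; in every case the estimate is comfortably within $4d+4$.
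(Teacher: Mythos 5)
Your proof is correct and takes essentially the same approach as the paper: the paper's own two-line proof applies \hyperref[lemma:simplex]{Lemma~\ref*{lemma:simplex}} with $k=1$ (where $s=\|\mathbf u-\mathbf v\|^2-1\geq 1=\sqrt{k}$) to bound the common neighbourhood by $2d+2$, and \hyperref[lemma:d+2]{Lemma~\ref*{lemma:d+2}} to bound the remaining points by $2d+2$. You have merely inlined both ingredients — your small-sphere-of-radius-$\leq 1/\sqrt{2}$, non-acute-angle, Rosenfeld/Bezdek--L\'angi argument for $A\cap B$ is exactly the $k=1$ case of \hyperref[lemma:simplex]{Lemma~\ref*{lemma:simplex}}, and your clique bound $|A\setminus B|\leq d$ rests on the same unit-simplex obstruction as \hyperref[lemma:d+2]{Lemma~\ref*{lemma:d+2}}.
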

\begin{proof}
Assume that $\|\mathbf v-\mathbf u\|\geq \sqrt{2}$ for $\mathbf v, \mathbf u\in V$. By \hyperref[lemma:simplex]{Lemma~\ref*{lemma:simplex}}, the number of points in $V$ that are at unit distance from $\mathbf v$ and $\mathbf u$ is at most $2d+2$. By \hyperref[lemma:d+2]{Lemma~\ref*{lemma:d+2}}, the number of points in $V$ that are not at unit distance from $\mathbf u$ or $\mathbf v$ is at most $2d+2$. Thus $|V|\leq 4d+4$.
\end{proof}
Clearly, \hyperref[proposition:diameter]{Proposition~\ref{proposition:diameter}} implies that, in order to prove \hyperref[conjecture:almost]{Conjecture~\ref{conjecture:almost}}, we can assume that the diameter of an almost-equidistant set does not exceed $\sqrt{2}$.
Therefore, it would be natural to ask the following question.
\begin{problem}
	\label{problem:diameter}
	A subset of $\mathbb R^d$ is called an \textit{almost-equidistant diameter set} if it is an almost-equidistant set in $\mathbb R^d$ and has diameter $1$. What is the largest cardinality of an almost-equidistant diameter set in $\mathbb R^d$?
\end{problem}
\section*{Acknowledgments}
We are grateful to \href{http://math.bme.hu/~zlangi/}{Zsolt L\'angi} for posing his question about the largest cardinality of almost-equidistant sets, and to \href{http://personal.lse.ac.uk/swanepoe/index.html}{Konrad Swanepoel} for several interesting and stimulating discussions on the problem. We thank an anonymous referee for valuable comments that helped to significantly improve the presentation of the paper and the clarity of the proofs.

The author is supported in part by ISF grant no.\ 409/16, and by the Russian Foundation for Basic Research through grant nos.\ 15-01-99563 A, 15-01-03530 A, and by the Leading Scientific Schools of Russia through grant no. NSh-6760.2018.1.

The work was done when the author was a postdoctoral fellow at the Technion.

\bibliographystyle{apalike}
\bibliography{biblio}
\end{document}